%
%
%

\documentclass[graybox]{svmult}


\usepackage{type1cm}        
%
\usepackage{makeidx}         
\usepackage{graphicx}        
\usepackage{multicol}        
\usepackage[bottom]{footmisc}

\usepackage{newtxtext}       %
\usepackage[varvw]{newtxmath}       


\makeindex             

\usepackage{bm}
\usepackage{amsmath}
\usepackage{amsthm}
\usepackage{appendix}
\usepackage{graphicx} 
\usepackage{subcaption}
\usepackage{comment}
\usepackage[colorlinks=true, linkcolor=blue, citecolor=green, urlcolor=blue]{hyperref}
\usepackage{float}
\usepackage{placeins}
\usepackage{amsfonts}
\usepackage{xcolor}
\usepackage{csquotes}
\usepackage{bm}
\usepackage{algorithm}
\usepackage{algpseudocode}
\numberwithin{theorem}{section}
\DeclareMathOperator*{\argmin}{arg\,min} 
\usepackage{chngcntr} 

\usepackage{listings}
\usepackage{xcolor} 

\begin{document}

\title*{Concentrated real-pole uniform-in-time approximation of the matrix exponential}
\author{Stefan G\"{u}ttel\orcidID{0000-0003-1494-4478} and\\ Shuai Shao\orcidID{0009-0001-3147-9355}}
\institute{Stefan G\"{u}ttel \at Department of Mathematics, The University of Manchester, United Kingdom, \email{stefan.guettel@manchester.ac.uk}
\and Shuai Shao \at Department of Mathematics, The University of Manchester, United Kingdom, \email{shuai.shao-2@manchester.ac.uk}}
%
%
\maketitle

\abstract{We propose an asymptotically optimal choice of shared concentrated real poles of a family of rational approximants of time-dependent exponential functions $\exp(-tz)$ for $z \geq 0$ and $t$ in a positive time interval~$T$. Our result extends a classical result  by J.-E.~Andersson [\emph{J.~Approx.~Theory}, 32(2):85--95, 1981] on the asymptotic best rational approximation of $\exp(-z)$ with real poles. Numerical experiments demonstrate the near-optimality of our choice for various time ranges and for both small and large approximation degrees. An application of the uniform-in-time rational approximation using our proposed concentrated real poles to a linear constant-coefficient initial-value problem is also discussed.}

\section{Introduction}
The numerical approximation of the action of a parametric matrix exponential function on a vector, $\exp(-tA)\bm{b}$, with a symmetric positive semidefinite matrix $A \in \mathbb{R}^{N\times N}$, a vector $\bm{b} \in \mathbb{R}^N$, for all $t \in T$ in a positive time interval $T$, is an important problem in science and engineering. For example, the exact solution of the fundamental dynamical system $\bm{f}'(t)+A\bm{f}(t) = \bm{0}$ can be written as $\bm{f}(t) = \exp(-tA)\bm{b}$, where $\bm{b} = \bm{f}(0)$ is the initial vector. Such systems arise when solving evolution problems in geophysics~\cite{{borner2015three}, {qiu2019block}, {borner2025fast}}, chemical reactions~\cite{{munsky2006finite},{jahnke2007solving}}, epidemiology~\cite{allen2008introduction}, network analysis~\cite{{estrada2012physics},{benzi2013ranking}}, and exponential integration~\cite{hochbruck2010exponential,guttel2010rational,guttel2013rational}. For a comprehensive introduction to matrix functions we refer to~\cite{higham2008functions}.

The explicit evaluation of a matrix function $\exp(A)$ is usually only possible for small to medium-sized matrices. For example, MATLAB's  $\texttt{expm}$ function implements a scaling and squaring approach to compute $\exp(A)$, which first computes a Pad\'{e} approximation of $\exp(A/2^s)$ and then performs $s$ matrix multiplications with  $s$ being carefully chosen~\cite{{higham2005scaling},{al2010new}}. This method is very reliable  but the  cost of dense matrix operations is $O(sN^3)$ flops, which becomes prohibitively expensive for large~$N$. Even more problematic is the fact that the algorithm has to be rerun from scratch for each $\exp(-tA)$ when the evaluation time point~$t$ changes, unless in special cases of, e.g., constant time steps. 
Therefore, alternative approaches that avoid forming the matrix exponential $\exp(-tA)$ entirely and do not strongly depend on $t$ are desirable.

A very effective approach is to construct a family of \textit{shared-pole} rational approximants 
\begin{equation}\label{partial_fraction_form}
    r_{t,n}(z) = \alpha_0(t)+\frac{\alpha_1(t)}{z-\sigma_1}+\cdots+\frac{\alpha_n(t)}{z-\sigma_n},
\end{equation}
with poles $\sigma_i$ and scalar residues $\alpha_i(t)$ such that 
\[
    r_{t,n}(A)\bm{b} \approx \exp(-tA)\bm{b}
\]
uniformly for all $t \in T$. We refer to $n$ as the degree of approximation. It is important that the poles $\sigma_i$ of $r_{t,n}$ do \emph{not} depend on the time parameter~$t$. Therefore, the computation of $r_{t,n}(A)\bm{b} $ involves solutions of only $n$ shifted linear systems $(A-\sigma_iI)\bm{x}_i = \bm{b}$, independent of the number of time points to evaluate for. Since these linear systems are completely decoupled, they can be solved in parallel using well-developed algorithms such as sparse Cholesky factorization \cite[Chaper~11]{golub2013matrix} or iterative methods~\cite{Saad2003}. 

The shared-pole restriction may be less relevant  if the number of evaluation time points is very small, allowing for  a lower degree $\widehat{n}$ for the same target tolerance $\texttt{tol}$\footnote{The tolerance $\texttt{tol}$ is set for the time-uniform error, which will be introduced later in~\eqref{timeuniform_err}. More precisely, $n$ and $\widehat{n}$ are the minimal degrees needed for the time-uniform error to fall below $\texttt{tol}$ with and without the shared-pole condition, respectively.} by finding a best approximant for each time point independently. If we allow for  poles to be placed anywhere in the complex plane, it is known that the set of optimal poles for $\exp(-z)$ lies on a  parabolic curve~\cite{trefethen2006talbot}. However, in many applications, the degree ratio $n/\widehat{n}$ is  much smaller than the number of discrete time points of interest, for various choices of $\texttt{tol}$ and $T$ in consideration; see, e.g., \cite[Section~3.3]{borner2025fast}. In such situations, the constraint on shared poles yields a substantially smaller total number of shifted linear systems to be solved for a fixed target tolerance. Our discussion hereafter will therefore focus on shared-pole approximants only.

Reliable algorithms for computing a family of rational approximants with shared poles $\sigma_i \in \mathbb{C}$ are readily available. These include  rational Krylov fitting (RKFIT)~\cite{berljafa2017rkfit} and the set-valued adaptive Antoulas--Anderson (svAAA) method~\cite{lietaert2022automatic}. The RKFIT method optimizes the shared poles iteratively, using orthogonal transformations on rational Krylov spaces in order to   approximately minimize a relative misfit given by the sum of squares of weighted error of each rational approximant. It then finds all the best approximants in a fixed shared-pole  rational Krylov subspace via least squares projection. The svAAA  method optimizes the shared interpolation nodes in the barycentric representation of rational approximants in a greedy fashion, with the poles being implicitly encoded in the barycentric form. The two algorithms are extensions of~\cite{berljafa2015generalized} and~\cite{nakatsukasa2018aaa}, respectively, from a single rational approximant to a family of rational approximants with shared poles. Since in both algorithms the shared poles are computed from a generalized eigenvalue problem involving a pair of real-valued matrices, the poles appear as conjugate pairs in the complex plane.

In this paper we focus on the case where $\sigma_i<0$ is restricted to be real. This  can lead to a significant reduction in the overall computational costs for matrix-valued problems as the resulting shifted linear systems are real (i.e., no complex arithmetic required) and symmetric positive definite.  Real-pole time-uniform approximations have been researched in the literature, for example in \cite{druskin2009solution}. In this work, the poles are placed in the negatively reflected spectral interval of $A$, relating the pole optimization problem to solving a classical Zolotarev problem on the real line. However, this approach requires knowledge of $A$'s spectral interval, and it deteriorates as the spectral interval increases. This approach hence does not typically lead to mesh-independent integrators with matrices arising from discretizations of differential operators. On the other hand, this approach leads to approximants which are good for the \emph{infinite} time interval $T = [0, +\infty)$, which may be useful in some circumstances (e.g., if the final time point of interest is not known in advance). 
In contrast, our paper proposes an approach to real-pole optimization that applies to  \textit{any} symmetric positive semidefinite matrix $A$ but requires a  \textit{finite} time interval~$T$.  

Before we proceed, let us briefly review some classical results on the rational approximation of the exponential function. For a fixed time point ($t=1$), Kaufman and Taylor~\cite{kaufman1978uniform} conjectured that the best uniform real-pole approximant to $\exp(-z)$ has all its poles \textit{concentrated} at a single point; that is, $\sigma_1 = \sigma_2=\cdots=\sigma_n = \sigma$ where $\sigma = -cn$ for some positive constant $c$, and this conjecture was later proved by Borwein~\cite{borwein1983rational}. Saff~\textit{et al.}~\cite{saff1975geometric} showed that for $c = 1$ the associated best uniform error decreases at most $O(2^{-n})$ and they suggested that $c = 1$ is also the optimal choice~\cite{saff1977some}. In a later work by Andersson~\cite{andersson1981approximation}, it was proved that the optimal choice is $c = 1/\sqrt{2}$, and the associated best uniform error decreases like $O((\sqrt{2}+1)^{-n})$.

In this paper, we will extend Andersson's result from~\cite{andersson1981approximation} to a time interval $T$, considering the case where the poles of all rational approximants are concentrated at a point $\sigma:=\sigma(n, T)$, which depends on the degree of approximation $n$ and the time interval $T$. Now, the partial fraction form \eqref{partial_fraction_form} must be changed to 
\begin{equation}\label{parial_fraction_same_poles}
    r_{t,n}(z) = \alpha_0(t)+\frac{\alpha_1(t)}{z-\sigma}+\frac{\alpha_2(t)}{(z-\sigma)^2}+\cdots+\frac{\alpha_n(t)}{(z-\sigma)^n}.
\end{equation}
Although the evaluation of $r_{t,n}(A)\bm{b}$ with $r_{t,n}$ in \eqref{parial_fraction_same_poles} involves solving dependent shifted linear systems which cannot be decoupled and computed in parallel, a benefit is that all these systems involve the same matrix $A-\sigma I$. Hence, only one Cholesky factorization needs to be computed and can be reused for all solves. 

In order to assess the quality of approximation, let us assume without loss of generality that $\Vert \bm{b}\Vert_2=1$, and note that for all $t \in T$,  
\begin{equation}\label{error_bound}
    \begin{split}
    \Vert r_{t,n}(A)\bm{b}-\exp(-tA)\bm{b}\Vert_2 &\leq \max_{z \in \Lambda(A)}\vert r_{t,n}(z)-\exp(-tz)\vert  \\ &\leq \max_{z \in [0, +\infty)}\vert r_{t,n}(z)-\exp(-tz)\vert \\ &=: \rho(n,t).
    \end{split}
\end{equation}
Here, $\Lambda(A)$ denotes the set of $A$'s eigenvalues. 
We aim to construct shared concentrated real-pole rational approximants $r_{t,n}$ such that $\rho(n,t)$ is uniformly small for all $t \in T.$ In other words, we consider the \textit{time-uniform error} $\rho(n, T)$  defined as
\begin{equation}\label{timeuniform_err}
    \rho(n, T): = \sup_{t \in T}\rho(n,t).
\end{equation}

This paper is organized as follows. In Section \ref{theories} we propose a method to determine an asymptotically optimal location of a shared concentrated real pole~$\sigma(n,T)$. 
We exploit a classical result on the asymptotically optimal concentrated real pole of a rational approximant for $\exp(-z)$ (i.e., a single time point $t=1$) by Andersson~\cite{andersson1981approximation}. In Section~\ref{numer_eval} we discuss efficient algorithms for the numerical evaluation of the proposed approximants. In Section~\ref{numerical_examples} we provide numerical experiments which confirm the near-optimality of our choice of poles, and we also compute matrix exponential functions acting on a vector arising from an initial-value problem. In Section~\ref{discussions} we conclude and discuss potential future work.

\section{Optimal shared concentrated real poles}\label{theories}
For any $q > 0$, let $\mathcal{R}^n_q$ be the set of degree $n$ real rational functions with poles concentrated at $-nq $,  
\[
    \mathcal{R}^n_q = \left\{\frac{p_n(z)}{(z+nq)^n}:p_n \in \mathcal{P}_n\right\}.
\]
Here and in the sequel, $\mathcal{P}_n$ denotes the class of polynomials of degree at most~$n$. Furthermore, for each time point $t$, let 
\[
    \rho^t_n(q) = \inf\left\{\sup_{z \geq 0}\vert r_{t,n}(z)-\exp(-tz) \vert: r_{t,n} \in \mathcal{R}^n_q\right\}.
\] 
The following theorem is one of the main results of~\cite{andersson1981approximation}.

\begin{theorem}\label{thm1}
For any $q > 0$, 
\[
    \displaystyle\lim_{n\rightarrow +\infty}\rho^1_n(q)^{1/n} = \tilde{H}(q),
\]
where 
\begin{equation}\label{expression_for_convergence_rate}
   \tilde{H}(q) = \exp \left(\log\Big|\frac{\tilde{q}-1}{\tilde{q}+1}\Big|+q \Re(\tilde{q}^2)\right),
\end{equation}
and $\tilde{q}$ is the root in $\Im(z) \geq 0$ of the equation
\begin{equation}\label{cubic_equ}
        q(z^3-z)+1 = 0
\end{equation}
that has the smallest positive real part. The minimum of $\tilde{H}$ is $\sqrt{2}-1$, which is attained at $1/\sqrt{2}.$
\end{theorem}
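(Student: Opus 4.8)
This statement packages two essentially independent facts — the asymptotic rate $\lim_n \rho^1_n(q)^{1/n}=\tilde H(q)$ and the minimization of $\tilde H$ over $q>0$ — and I would establish them in turn. For the rate, the plan is to reduce to a scaled, weighted polynomial approximation problem: writing $r_{1,n}(z)=p_n(z)/(z+nq)^n$ and substituting $z=nw$, $\tilde p_n(w):=n^{-n}p_n(nw)\in\mathcal P_n$, one has
\[
\rho^1_n(q)=\inf_{\tilde p_n\in\mathcal P_n}\;\sup_{w\ge 0}\,(w+q)^{-n}\bigl|(w+q)^n e^{-nw}-\tilde p_n(w)\bigr|,
\]
i.e.\ best weighted polynomial approximation of the entire function $w\mapsto (w+q)^n e^{-nw}=e^{n\phi(w)}$, with $\phi(w)=\log(w+q)-w$, in the varying weight $(w+q)^{-n}$ on the half-line. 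Since a nonzero element of $\mathcal R^n_q$ has at most $n$ zeros on $[0,+\infty)$, this family is an $(n+1)$-dimensional Chebyshev system there, so the extremal error equioscillates at $n+2$ points and the extremal $r_{1,n}$ interpolates $e^{-z}$ at $n+1$ of them.

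For the upper bound I would discard the optimal nodes and instead interpolate $e^{-z}$ at an explicit near-optimal set $\{z_j\}$ whose normalized counting measure converges to the equilibrium measure of the weighted energy problem generated by $\phi$, then estimate the Hermite contour-integral remainder
\[
e^{-z}-r(z)=\frac{\omega_n(z)}{(z+nq)^{n}}\cdot\frac{1}{2\pi i}\oint_\Gamma\frac{(\zeta+nq)^{n}e^{-\zeta}}{\omega_n(\zeta)(\zeta-z)}\,d\zeta,\qquad \omega_n(z)=\prod_j (z-z_j),
\]
by deforming $\Gamma$ through the governing saddle point; for the lower bound I would use the dual linear-functional (signed-measure) characterization of the best error together with the same energy problem, so that the two estimates meet at a common exponential rate. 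That rate is $\exp(\Re h(\tilde q))$ for $h(z):=\log\frac{z-1}{z+1}+qz^2$ and a suitable saddle $\tilde q$, because $h'(z)=\frac{2}{z^2-1}+2qz=\frac{2\,(q(z^3-z)+1)}{z^2-1}$, so the saddle equation $h'(\tilde q)=0$ is exactly the cubic \eqref{cubic_equ}, and $\exp(\Re h(\tilde q))=\exp\bigl(\log\bigl|\tfrac{\tilde q-1}{\tilde q+1}\bigr|+q\,\Re(\tilde q^2)\bigr)=\tilde H(q)$. The main obstacle — and the technical core of Andersson's argument — is to prove that the saddle controlling the steepest-descent deformation is precisely the root of \eqref{cubic_equ} with smallest positive real part in the closed upper half-plane (with no competing critical point), and to make all estimates uniform as $z$ runs over the unbounded half-line while the pole $-nq$ itself recedes with $n$.

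Granting the rate, the minimization is a self-contained computation. Since the roots of \eqref{cubic_equ} are exactly the critical points of $z\mapsto h(z)=\log\frac{z-1}{z+1}+qz^2$, the envelope identity gives $\frac{d}{dq}\log\tilde H(q)=\Re\bigl(\tilde q(q)^2\bigr)$. Examining \eqref{cubic_equ}: for $0<q<\tfrac{3\sqrt 3}{2}$ it has one negative real root and a conjugate pair of positive real part (so $\tilde q$ is the upper member), while for $q>\tfrac{3\sqrt 3}{2}$ all three roots are real and $\tilde q$ is the smallest positive one, the two regimes matching continuously at $\tilde q=1/\sqrt 3$ when $q=\tfrac{3\sqrt 3}{2}$. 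Requiring $\Re(\tilde q^2)=0$ forces $\arg\tilde q\in\{\pi/4,3\pi/4\}$; substituting $\tilde q=e^{i\pi/4}$ into \eqref{cubic_equ} yields $\tilde q^3-\tilde q=e^{i\pi/4}(i-1)=-\sqrt 2$ and hence $q=1/\sqrt 2$, whereas $\tilde q=e^{i 3\pi/4}$ would demand $q=-1/\sqrt 2<0$ and is inadmissible; thus $q=1/\sqrt 2$ is the unique stationary point of $\log\tilde H$ on $(0,+\infty)$. Since $\tilde q\approx (1/q)^{1/3}e^{i\pi/3}$ as $q\to 0^+$ gives $\Re(\tilde q^2)<0$ and $\tilde q\approx 1/q$ as $q\to+\infty$ gives $\Re(\tilde q^2)>0$, $\log\tilde H$ decreases then increases, so $q=1/\sqrt 2$ is the global minimum. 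Finally, at $q=1/\sqrt 2$ one has $\tilde q=e^{i\pi/4}$, $\Re(\tilde q^2)=\Re(i)=0$, and $\bigl|\tfrac{e^{i\pi/4}-1}{e^{i\pi/4}+1}\bigr|=\tan(\pi/8)=\sqrt 2-1$, giving $\tilde H(1/\sqrt 2)=\sqrt 2-1$.
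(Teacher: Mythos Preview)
The paper does not actually prove this theorem: its ``proof'' is a one-line citation to Andersson~\cite{andersson1981approximation}. Consequently, for the asymptotic rate $\lim_n \rho^1_n(q)^{1/n}=\tilde H(q)$ there is no in-paper argument to compare against. Your outline---rescaling to a weighted polynomial problem, equioscillation via the Chebyshev-system structure of $\mathcal R^n_q$, Hermite contour remainder plus steepest descent for the upper bound, and a dual/equilibrium-measure argument for the lower bound---is a plausible reconstruction of an Andersson-style proof, and your identification of the saddle equation with the cubic~\eqref{cubic_equ} via $h'(z)=2(q(z^3-z)+1)/(z^2-1)$ is correct. You rightly flag the genuine technical obstacle (selecting the correct saddle and getting uniformity over the unbounded half-line), and you do not claim to have overcome it; that is an honest position, but of course it means this part remains a plan rather than a proof.

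For the minimization of $\tilde H$, the paper \emph{does} give a full argument, in Theorem~\ref{h_tilde}, and your approach is genuinely different from it. The paper uses Cardano's formula to parametrize the relevant root explicitly, splits into the regimes $q\lessgtr 3\sqrt3/2$, introduces the real substitution $m=a+b$ (respectively works directly with the real root $\tilde q$), and computes the sign of the derivative of the resulting one-variable expressions $\tilde h(m)$ and $\phi(\tilde q)$ by hand. Your route is the envelope identity $\tfrac{d}{dq}\log\tilde H(q)=\Re(\tilde q(q)^2)$, obtained from $\partial h/\partial q=z^2$ together with $h'(\tilde q)=0$, followed by locating where $\Re(\tilde q^2)=0$. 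This is slicker and conceptually cleaner; the paper's Cardano computation is more elementary but longer, and it delivers slightly more (full monotonicity on both sides and the limits $\tilde H\to 1$ at $0^+$ and $+\infty$, which the paper needs later for Theorem~\ref{asymptoticbestpoles_manyf}). One small gap in your write-up: when you set $\tilde q=e^{i\pi/4}$ you silently use $|\tilde q|=1$. This follows because $\tilde q^3-\tilde q=-1/q$ must be real, and for $\tilde q=re^{i\pi/4}$ the imaginary part $r(\,r^2\sin\tfrac{3\pi}{4}-\sin\tfrac{\pi}{4}\,)$ vanishes only for $r=1$; you should state this. You should also note that the envelope argument needs $\tilde q(q)$ to vary smoothly, which holds away from the repeated-root value $q=3\sqrt3/2$ and is handled there by continuity, exactly as the paper checks at the end of its proof of Theorem~\ref{h_tilde}.
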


\begin{proof}
    We refer the reader to \cite{andersson1981approximation}.
\end{proof}
Theorem \ref{thm1} suggests that $-n/\sqrt{2}$ is an asymptotically optimal location for a  concentrated real pole when approximating $\exp(-z)$ on the non-negative real axis, and its associated uniform error decreases like  $(\sqrt{2}+1)^{-n}$ as $n$ gets large. 

We now extend the result of Theorem \ref{thm1} to rational approximants of a family of exponential functions $\exp(-tz)$ for $t \in T := [t_{\min}, t_{\max}]$ with $0<t_{\min}<t_{\max}$. Let 
\[
    \rho^T_n(q) = \sup_{t\in T}\rho^t_n(q)
\]
be the time-uniform error corresponding to a family of best approximants with shared poles concentrated at $-nq$, over the time interval $T$. We propose an asymptotically optimal choice of $q$ that minimizes $\rho^T_n$ for large $n$, which is derived from the above function $\tilde{H}$. Let us summarize the result in the following theorem.

\begin{theorem}\label{asymptoticbestpoles_manyf}
    For any $q > 0$, let $G(q) = \displaystyle\lim_{n\rightarrow+\infty}\left(\rho^T_n(q)\right)^{1/n}$ be the asymptotic rate of convergence associated with a family of best rational approximants with the concentrated real pole $-nq$, over the time interval $T: = [t_{\min}, t_{\max}]$ with $0 < t_{\min} < t_{\max}$.  Then 
    \begin{equation}\label{argmin}
       \argmin_{q>0} G(q) = \argmin_{q>0}\displaystyle\sup_{z \in [qt_{\min}, qt_{\max}]} \tilde{H}(z),
    \end{equation}
where $\tilde{H}$ is the function defined in \eqref{expression_for_convergence_rate}. 
\end{theorem}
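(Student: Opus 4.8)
The plan is to reduce the time-interval problem to a family of single-point problems already controlled by Theorem~\ref{thm1}, and then to pull the asymptotics through the supremum over $t \in T$. First I would fix $q > 0$ and a time point $t \in T$, and observe that approximating $\exp(-tz)$ on $z \geq 0$ by a rational function with pole of order $n$ at $-nq$ is, after the substitution $w = tz$, the same as approximating $\exp(-w)$ on $w \geq 0$ by a rational function with pole of order $n$ at $-nqt$. Indeed, if $r_{t,n}(z) = p_n(z)/(z+nq)^n$ with $p_n \in \mathcal{P}_n$, then $r_{t,n}(w/t) = \tilde p_n(w)/(w + nqt)^n$ for a corresponding $\tilde p_n \in \mathcal{P}_n$, and the supremum of the error is unchanged. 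Writing $q' := qt$ and noting that $-nq' = -(n)\,(q t/1)$ matches the normalization "pole at $-n\cdot(\text{constant})$" in $\mathcal{R}^n_{q'}$, this gives the exact identity $\rho^t_n(q) = \rho^1_n(qt)$ for every $n$, every $q>0$, every $t>0$. Hence $\rho^T_n(q) = \sup_{t \in T}\rho^1_n(qt) = \sup_{s \in [qt_{\min},\,qt_{\max}]}\rho^1_n(s)$.

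Next I would take $n$-th roots and pass to the limit. By Theorem~\ref{thm1}, $\rho^1_n(s)^{1/n} \to \tilde H(s)$ pointwise in $s > 0$. The remaining task is to justify
\[
    \lim_{n\to\infty}\Big(\sup_{s\in[qt_{\min},\,qt_{\max}]}\rho^1_n(s)\Big)^{1/n}
    \;=\; \sup_{s\in[qt_{\min},\,qt_{\max}]}\tilde H(s),
\]
i.e. that the $n$-th-root limit commutes with the supremum over the compact interval $[qt_{\min},qt_{\max}]$. This is where the argument needs care: pointwise convergence alone does not give uniform convergence of $\rho^1_n(s)^{1/n}$, so I would establish it by two one-sided bounds. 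For the lower bound, for any fixed $s_0$ in the interval we have $\sup_s \rho^1_n(s) \geq \rho^1_n(s_0)$, so $\liminf_n (\sup_s \rho^1_n(s))^{1/n} \geq \tilde H(s_0)$; taking $s_0$ to maximize $\tilde H$ (the maximum is attained since $\tilde H$ is continuous on the compact interval — continuity of $\tilde H$ following from continuous dependence of the relevant root $\tilde q$ of the cubic \eqref{cubic_equ} on its coefficient) gives $\liminf_n(\sup_s \rho^1_n(s))^{1/n} \geq \sup_s \tilde H(s)$. For the upper bound I would use monotonicity or a uniform continuity/equicontinuity property of $s \mapsto \rho^1_n(s)$: the quantities $\rho^1_n(s)$ are moduli of best approximation, hence I expect $\log \rho^1_n(s)$ to be, say, Lipschitz in $\log s$ with a constant independent of $n$ (an $O(n)$ Lipschitz bound on $\rho^1_n(s)^{1/n}$ after taking roots, which then shrinks appropriately, or more cleanly an $n$-independent bound on $\frac{1}{n}\log\rho^1_n$). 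Combined with a finite $\varepsilon$-net of the interval on which pointwise convergence applies, this yields $\limsup_n(\sup_s \rho^1_n(s))^{1/n} \leq \sup_s \tilde H(s) + \varepsilon$ for every $\varepsilon$. Together the two bounds give $G(q) = \sup_{s\in[qt_{\min},qt_{\max}]}\tilde H(s)$, and taking $\argmin_{q>0}$ of both sides is then immediate, which is \eqref{argmin}.

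The main obstacle I anticipate is precisely this interchange of the $n$-th-root limit with the supremum over $t$: I would need either a uniform (in $n$) regularity estimate on $s \mapsto \frac{1}{n}\log \rho^1_n(s)$, or a monotonicity property of $\rho^1_n$ in its argument $s$, to upgrade Andersson's pointwise statement to something strong enough to control the sup. A clean route is to show $s \mapsto \rho^1_n(s)$ is monotone on each side of its minimizer, or simply to note that $\rho^1_n$ is a continuous function of $s$ for each $n$ and that the family $\{\frac1n\log\rho^1_n\}_n$ is uniformly bounded and equicontinuous on the compact interval (so Arzelà–Ascoli plus pointwise convergence to the continuous limit $\log\tilde H$ forces uniform convergence); either way the supremum then converges. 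A secondary, more routine point is verifying that $\tilde H$ is continuous and that the $\sup$ on the right-hand side of \eqref{argmin} is attained, which follows from continuous dependence of the root of the cubic \eqref{cubic_equ} with smallest positive real part on the parameter $q$ together with the explicit formula \eqref{expression_for_convergence_rate}.
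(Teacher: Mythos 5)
Your proposal follows essentially the same route as the paper: the change of variable $w = tz$ gives the identity $\rho^t_n(q) = \rho^1_n(qt)$, hence $\rho^T_n(q) = \sup_{s \in [qt_{\min},\, qt_{\max}]}\rho^1_n(s)$, and Theorem~\ref{thm1} then identifies the asymptotic rate with $\sup_{z\in[qt_{\min}, qt_{\max}]}\tilde{H}(z)$, after which taking $\argmin$ is immediate. The only difference is that you explicitly flag the interchange of the $n$-th-root limit with the supremum over the compact interval as the delicate step requiring a uniformity (equicontinuity or monotonicity) argument, whereas the paper performs this swap silently citing only the pointwise statement of Theorem~\ref{thm1} --- your added care there is warranted rather than a defect.
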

\begin{proof}
For any $t \in T$, by using a change of variable $w = tz$, we have 
\begin{align*}
    \rho^t_n(q) &=  \inf\left\{\sup_{z \geq 0}\vert r_{t,n}(z)-\exp(-tz) \vert: r_{t,n} \in \mathcal{R}^n_q\right\} \\
   &= \inf\left\{\sup_{w \geq 0}\vert r_{1,n}(w)-\exp(-w) \vert: r_{1,n} \in \mathcal{R}^n_{qt}\right\} \\ &= \rho^1_n(qt).
\end{align*}
Therefore, we have
\[
    \rho^T_n(q) = \displaystyle\sup_{t \in T}\rho^t_n(q) = \displaystyle\sup_{t \in T} \rho^1_n(qt),
\]
and 
\begin{equation}\label{connection_gH}
        G(q) =  \displaystyle\sup_{t \in T}\left(\lim_{n \rightarrow +\infty} \rho^1_n(qt)\right)^{1/n} = \displaystyle\sup_{z \in [qt_{\min}, qt_{\max}]} \tilde{H}(z),
\end{equation}
where the second equality in \eqref{connection_gH} follows from Theorem \ref{thm1}. The proof is completed by taking $\argmin$ on the left and right-hand sides of \eqref{connection_gH}. 
\end{proof}

Theorem \ref{asymptoticbestpoles_manyf} allows us to determine an asymptotically optimal location of the shared concentrated poles by searching for an optimal interval with fixed endpoint ratio $\tau: = t_{\max}/t_{\min}$ such that the supremum of $\tilde{H}$ over that  interval is minimized. The following theorem on the monotonicity of $\tilde{H}$ guarantees that this optimization problem has a unique solution. 

\begin{theorem}\label{h_tilde}
    The function $\tilde{H}$ defined in Theorem~\ref{thm1} is monotonically decreasing when $q \in (0, 1/\sqrt{2})$ and increasing when $q \in [1/\sqrt{2}, +\infty)$. Moreover, $\tilde{H}$ has the following asymptotic behavior
    \[
        \lim_{q \rightarrow +0} \tilde{H}(q) = \lim_{q \rightarrow +\infty} \tilde{H}(q) = 1.
    \]
\end{theorem}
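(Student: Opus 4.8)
The plan is to obtain an explicit, tractable parametrization of $\tilde H(q)$ in terms of the root $\tilde q$ of the cubic~\eqref{cubic_equ}, and then to study monotonicity by differentiating with respect to $q$ (or, more conveniently, with respect to a parameter along the root locus). First I would analyze the cubic $q(z^3-z)+1=0$: for small $q>0$ one real root is large and negative (of size $\sim -q^{-1/3}$) while the other two roots are complex and close to $\pm 1$; as $q$ increases the structure of the roots changes, and one must track which root $\tilde q$ has smallest positive real part with $\Im(\tilde q)\ge 0$. A key auxiliary fact I expect to need is the value $q=1/\sqrt2$: there the cubic should have $\tilde q = (1+i)/\sqrt2 = e^{i\pi/4}$ (so $\tilde q^2 = i$, giving $\Re(\tilde q^2)=0$ and $|\tilde q-1|/|\tilde q+1|$ equal to $\sqrt2-1$), which both recovers the minimum value $\sqrt2-1$ from Theorem~\ref{thm1} and pins down where the derivative of $\tilde H$ should vanish.

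Next I would compute $\frac{d}{dq}\log\tilde H(q)$. Writing $\log\tilde H(q) = \log\bigl|\frac{\tilde q-1}{\tilde q+1}\bigr| + q\,\Re(\tilde q^2)$ and using implicit differentiation of~\eqref{cubic_equ}, namely $q(3\tilde q^2-1)\tilde q' + (\tilde q^3-\tilde q) + 1\cdot\frac{d}{dq}(\text{const}) = 0$, i.e.\ $\tilde q' = -\frac{\tilde q^3-\tilde q}{q(3\tilde q^2-1)} = \frac{1}{q^2(3\tilde q^2-1)}$ (using $q(\tilde q^3-\tilde q)=-1$), one gets a closed-form expression for $(\log\tilde H)'$ as a rational function of $\tilde q$ and $q$. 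I would then substitute $q = -1/(\tilde q^3-\tilde q) = 1/(\tilde q-\tilde q^3)$ to eliminate $q$ entirely and reduce $(\log\tilde H)'$ to a function of $\tilde q$ alone; the claim becomes that this function has the sign of $(q-1/\sqrt2)$, equivalently changes sign exactly as $\tilde q$ crosses $e^{i\pi/4}$. The asymptotics are comparatively easy: as $q\to 0^+$ one has $\tilde q\to 1$ (the root near $+1$), so $\log|\frac{\tilde q-1}{\tilde q+1}|\to-\infty$ but $q\,\Re(\tilde q^2)\to 0$ — here I would need the finer expansion $\tilde q = 1 + c\sqrt q + \cdots$ to see that $\log|\tilde q-1| \sim \tfrac12\log q \to -\infty$ while... wait, that would force $\tilde H\to 0$, not $1$; so in fact the correct small-$q$ root must be the one with $\Re(\tilde q)$ small, i.e.\ $\tilde q \to 0$ scaled appropriately, and I would redo the expansion carefully so that both terms in~\eqref{expression_for_convergence_rate} tend to $0$ and $\tilde H\to 1$. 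As $q\to+\infty$, the three roots of $z^3 - z + 1/q = 0$ approach the cube roots of... the roots of $z^3-z=0$, namely $\{0,1,-1\}$; the one with smallest positive real part and $\Im\ge0$ tends to $0$, giving $\log|\frac{\tilde q-1}{\tilde q+1}|\to 0$ and $q\,\Re(\tilde q^2)\to 0$ (since $\tilde q^2 = O(q^{-2})$ when $\tilde q\approx -1/q$... again requiring care about which root), hence $\tilde H\to 1$.

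I expect the main obstacle to be twofold. The conceptual difficulty is tracking the correct branch $\tilde q(q)$ across the whole range $q\in(0,\infty)$ — i.e.\ verifying continuity of the selection "smallest positive real part among roots with $\Im\ge 0$," checking there is no discriminant crossing at which two eligible roots swap, and confirming the endpoint limits of $\tilde q$. The computational difficulty is showing the derivative $(\log\tilde H)'$, once reduced to a rational expression in $\tilde q$ over the root locus, is negative before $e^{i\pi/4}$ and positive after; this likely amounts to proving a single algebraic inequality (a polynomial in $\tilde q$ and $\bar{\tilde q}$, or in $\Re\tilde q$ and $\Im\tilde q$ after clearing denominators) keeps a fixed sign on each arc. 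I would handle that either by factoring out the expected zero at $\tilde q=e^{i\pi/4}$ and checking the remaining factor is sign-definite, or — if a clean factorization is elusive — by parametrizing the root locus (e.g.\ by $\Im\tilde q$ or by $q$ itself) and reducing to a one-variable real inequality that can be dispatched by elementary estimates. Given that Theorem~\ref{thm1} already identifies $1/\sqrt2$ as the global minimizer of $\tilde H$, the substance here is purely the \emph{strict} unimodality plus the two limits, so the argument should be self-contained modulo these calculus steps.
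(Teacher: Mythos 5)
Your overall strategy---parametrize $\tilde H$ by the root $\tilde q$ along the root locus of \eqref{cubic_equ}, differentiate implicitly, and locate the sign change of $(\log\tilde H)'$ at $q=1/\sqrt2$ where $\tilde q=e^{i\pi/4}$---is the right one and is essentially what the paper does. But the heart of the theorem, the strict monotonicity on $(0,1/\sqrt2)$ and $(1/\sqrt2,\infty)$, is not actually established in your write-up: you defer it to ``a single algebraic inequality'' in $\tilde q$ and $\bar{\tilde q}$ that you propose to check by factoring or by a one-variable reduction, without exhibiting either. That reduction is precisely where the work lies. The paper carries it out by splitting at the discriminant zero $q=3\sqrt3/2$ (where the cubic changes from one real plus two complex roots to three real roots, and the eligible root changes character), and in each regime finds a clean one-variable parametrization: for $0<q<3\sqrt3/2$ it sets $m=a+b$ (the sum of the Cardano radicals, equal to $-2\Re(\tilde q)$), uses $qm(m^2-1)=-1$ to eliminate $q$, and obtains $\tilde h(m)=\sqrt{\tfrac{m+1}{m-1}}\exp\bigl(\tfrac{m^2-2}{2m(m^2-1)}\bigr)$ whose derivative factors explicitly with its only relevant zero at $m=-\sqrt2$ (i.e.\ $q=1/\sqrt2$); for $q>3\sqrt3/2$ the eligible root is the real root $\tilde q\in(0,1/\sqrt3)$ and $\tilde H=\phi(\tilde q)=\tfrac{1-\tilde q}{1+\tilde q}\exp\bigl(\tfrac{\tilde q}{1-\tilde q^2}\bigr)$ with $\phi'<0$ there. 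Without some such explicit reduction your argument remains a plan rather than a proof.

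A second, concrete error: your small-$q$ asymptotics. Neither of your two guesses for the branch is correct. For $q\to0^+$ the three roots of $z^3-z+1/q=0$ all have magnitude $\sim q^{-1/3}$; the eligible one is $\tilde q\sim q^{-1/3}e^{i\pi/3}\to\infty$, not a root tending to $1$ or to $0$. The limit $\tilde H\to1$ then follows because $\bigl|\tfrac{\tilde q-1}{\tilde q+1}\bigr|\to1$ (as $\tilde q\to\infty$) and $q\,\Re(\tilde q^2)\sim-\tfrac12q^{1/3}\to0$. Your large-$q$ limit is right in outcome, though the root near the origin is $\tilde q\approx+1/q$ (it must have positive real part), not $-1/q$. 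Your verification that $\tilde q=e^{i\pi/4}$ solves the cubic at $q=1/\sqrt2$ and reproduces the value $\sqrt2-1$ is correct and is a useful consistency check, but by itself it only locates the critical point; it does not deliver the unimodality.
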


\begin{proof}
    By Cardano's formula, the discriminant of the cubic equation~\eqref{cubic_equ} can be written as $\Delta = 1/4q^2-1/27$ for all $q > 0$, which has the critical value $q^* = 3\sqrt{3}/2$.
    
    When $0<q<3\sqrt{3}/2$, we have $\Delta > 0$ and the three roots are given by 
    \begin{align*}
        z_1 &= a+b, \\
        z_2 &= a\bm{w}+b\bm{w}^2,\\
        z_3 &= b\bm{w} + a\bm{w}^2,
        \end{align*}
        where $a = (-1/2q +\sqrt{\Delta})^{1/3}$ and $b = (-1/2q -\sqrt{\Delta})^{1/3}$ satisfy $b < a < 0$, and $\bm{w} = e^{2\pi i/3}$. In this case, $z_1$ is a negative real number, $z_2$ and $z_3$ are complex conjugate numbers on the right-half plane with $\Im(z_2) > 0$ and $\Im(z_3) < 0$. Therefore, we have 
        \begin{align}
            \tilde{H}(q) &= \Big|\frac{z_2-1}{z_2+1}\Big|\exp(q \Re(z_2^2)) \nonumber \\
            &= \frac{\Big| (-\frac{1}{2}a-\frac{1}{2}b-1)+(\frac{\sqrt{3}}{2}a- \frac{\sqrt{3}}{2}b)i\Big|}{\Big| (-\frac{1}{2}a-\frac{1}{2}b+1)+(\frac{\sqrt{3}}{2}a- \frac{\sqrt{3}}{2}b)i\Big|} \exp(q\Re((a\bm{w}+b\bm{w}^2)^2) \nonumber \\
            &= \sqrt{\frac{(a+b)+1}{(a+b)-1}}\exp(-\frac{1}{2}q((a+b)^2-2)) \nonumber \\
            &:= \sqrt{\frac{m+1}{m-1}}\exp(-\frac{1}{2}q(m^2-2)) \label{interm_form}.
        \end{align}
The third equality follows from the relation $ab = 1/3$ and $\bm{w}^3=1$, and we used a change of variable $m: = a+b$ for the fourth equality. Moreover, by noticing that 
\begin{equation}\label{q_m}
    -1 = q(a^3+b^3) = q(a+b)((a+b)^2-1) = qm(m^2-1),
\end{equation}
we can now represent the expression~\eqref{interm_form} merely in terms of $m$, 
\[
    \tilde{H}(q) = \tilde{h}(m) = \sqrt{\frac{m+1}{m-1}}\exp\biggl(\frac{m^2-2}{2m(m^2-1)}\biggr).
\]
Taking the implicit derivative with respect to $q$ in equation~\eqref{q_m} yields: 
\[
    q(3m^2-1)m'(q)+m(m^2-1) = 0,
\]
and hence 
\[
    m'(q) = \frac{1}{q^2(3m^2-1)}.
\]
Since 
\[
    m^2 = (a+b)^2 \geq 4ab = 4/3,
\]
we conclude that $m'(q) > 0$ and $m(q)$ is an increasing function for $0<q<3\sqrt{3}/2$. As $m(q) = -\infty$ when $q \rightarrow 0^+$ and $m(q) = -2/\sqrt{3}$ when $q \rightarrow 3\sqrt{3}/2^-$, it suffices to analyze the monotonicity of $\tilde{h}$ on the domain $(-\infty, -2/\sqrt{3})$. Note that the derivative of $\tilde{h}$ is given by 
\[
    \tilde{h}'(m) = -\frac{1}{2}\Big(\frac{m+1}{m-1}\Big)^{-3/2}m^{-2}(m-1)^{-4}(m^2-2)(3m^2-1)\exp\Biggl(\frac{m^2-2}{2m(m^2-1)}\Biggr),
\]
which is negative when $m \in (-\infty, -\sqrt{2})$ and positive when $m \in (-\sqrt{2}, -2/\sqrt{3})$. Since $m(1/\sqrt{2}) = -\sqrt{2}$, we conclude that $\tilde{H}(q)$ is monotonically decreasing when $q \in (0, 1/\sqrt{2})$ and increasing when $q \in (1/\sqrt{2}, 3\sqrt{3}/2)$. Moreover, we can obtain the asymptotic behavior 
\[
    \lim_{q \rightarrow 0^+}\tilde{H}(q) = \lim_{m \rightarrow -\infty}\tilde{h}(m) = 1.
\]

When $q > 3\sqrt{3}/2$, we have $\Delta < 0$ and all three roots of equation~\eqref{cubic_equ} are real, which can be obtained as the intersection of the real-valued function $g_1(x) = x^3-x$ and the constant function $g_2(x)=-1/q$. Therefore, $\tilde{q}$ is the smallest positive real root of~\eqref{cubic_equ}. Since $g'_1(x) < 0$ for $0< x < 1/\sqrt{3}$ and $g'_1(x) > 0 $ for $1/\sqrt{3} < x < 1$, and $g_1(1/\sqrt{3}) < -1/q < 0 = g_1(0)=g_1(1)$, we have $\tilde{q} \in (0, 1/\sqrt{3})$. By representing $q$ in terms of $\tilde{q}$, $\tilde{H}$ can also be written as a function of $\tilde{q}$:

\begin{equation}\label{h_tild}
     \tilde{H}(q) = \phi(\tilde{q})=\Biggl(\frac{1-\tilde{q}}{1+\tilde{q}}\Biggr)\exp\Biggl(\frac{\tilde{q}}{1-\tilde{q}^2}\Biggr).
\end{equation}
Taking the derivative yields

\[
\phi'(\tilde{q}) = \frac{1-3\tilde{q}^2}{(\tilde{q}-1)(\tilde{q}+1)^3}\exp\Biggl(\frac{\tilde{q}}{1-\tilde{q}^2}\Biggr),
\]
which is negative for all $\tilde{q} \in (0, 1/\sqrt{3})$, and hence implying $\phi$ is a monotonically decreasing function with $\tilde{q}$. When $q$ increases, its associated $\tilde{q}$ decreases, which implies $\phi$ increases and hence $\tilde{H}$ increases. Moreover, we can obtain another asymptotic behavior 
\[
\lim_{q \rightarrow +\infty}\tilde{H}(q) = \lim_{\tilde{q}\rightarrow 0^+}\phi(\tilde{q})=1.
\]
The continuity of $\tilde{H}$ at the critical value $3\sqrt{3}/2$ can be ensured by noticing 
\[
\lim_{q \rightarrow 3\sqrt{3}/2^-}\tilde{H}(q)=\lim_{m\rightarrow -2/\sqrt{3}^-}\tilde{h}(m)=(2-\sqrt{3})\exp(\sqrt{3}/2),
\] and
\[
\lim_{q \rightarrow 3\sqrt{3}/2^+}\tilde{H}(q)=\lim_{\tilde{q}\rightarrow 1/\sqrt{3}^-}\phi(\tilde{q})=(2-\sqrt{3})\exp(\sqrt{3}/2).
\]
This completes the proof. 
\end{proof}

Figure~\ref{Hhatfunction} is a numerical plot of $\tilde{H}(q)$ using MATLAB for a million logarithmically spaced points $q$ in $[10^{-6}, 10^6]$, with the minimal point $(1/\sqrt{2},\sqrt{2}-1)$ marked by a red asterisk. The plot confirms our results in Theorem~\ref{h_tilde}. Based on the simple structure of $\tilde{H}$, we can numerically determine the right-hand side of \eqref{argmin} as the unique $\widehat{q}>0$ such that $\tilde{H}(\widehat{q}t_{\min}) = \tilde{H}(\widehat{q}t_{\max})$. Any other choice corresponds to a shifted interval with the same endpoint ratio $\tau$, which would lead to a larger supremum. 

Table~\ref{table1} collects values of $\widehat{q}$ and $G(\widehat{q})$ for $5$ different time ranges $T = [t_{\min}, t_{\max}] = 1, [10^{-1}, 1], [10^{-2}, 1], [10^{-3}, 1], [10^{-4}, 1]$, respectively.  Correspondingly, the asymptotically optimal locations of the shared concentrated pole for rational approximants of degree $n$ in these time ranges are around $\sigma(n, T_0) = -0.71n$, $\sigma(n,T_1) = -1.70n$, $\sigma(n, T_2) = -2.67n$, $\sigma(n, T_3) = -4.31n$, $\sigma(n, T_4) = -7.47n$, respectively. 

While Table~\ref{table1} records optimal pole parameters for the case $t_{\max}=1$, a simple rescaling of time as used in Theorem~\ref{asymptoticbestpoles_manyf} can be used to get the optimal pole for any time interval $[t_{\min}, t_{\max}]$. The convergence rate only depends on the ratio
$\tau= t_{\max}/t_{\min}$.

\begin{figure}[ht]
\centering
        \includegraphics[width=0.7\textwidth]{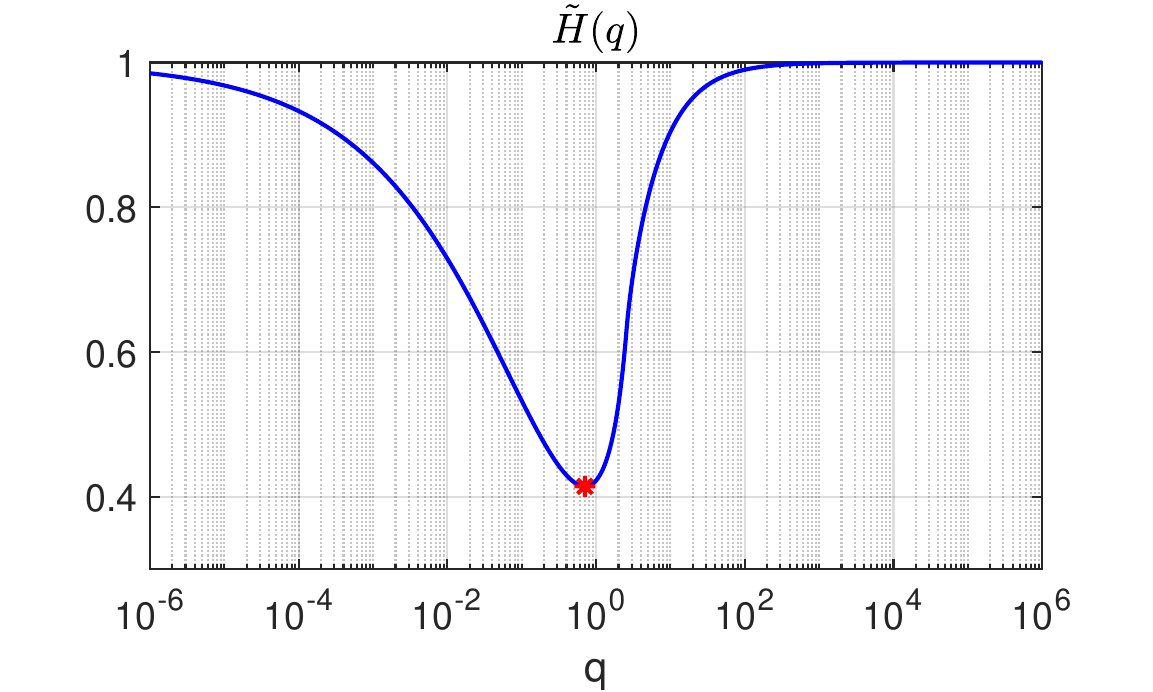}
        \caption{\textit{The plot of $\tilde{H}$ as a function of $q$. 
        }}
        \label{Hhatfunction}
\end{figure}

\begin{table}[h]
\centering
\caption{Asymptotically optimal parameters for approximating $\exp(-tz)$ uniformly for for $z\geq 0$ and over a time interval~$T =  [t_{\min}, t_{\max}]$. The optimal pole is at  $-n\widehat{q}$ and the expected convergence rate is $G(\widehat{q})$.}

\label{table1}

\setlength{\tabcolsep}{10pt}{
\begin{tabular}{c c c c c} 
                    
  \textbf{$T$} & \textbf{$t_{\min}$} & \textbf{$t_{\max}$} & \textbf{$\widehat{q}$} & \textbf{$G(\widehat{q})$} \\
  \hline
  $T_0$ & $1$  & $1$ & $0.71$ & $0.41$ \\
  
  $T_1$ & $10^{-1}$ & $1$ & $1.70$ & $0.49$ \\
  
  $T_2$ & $10^{-2}$ & $1$ & $2.67$ & $0.65$ \\
  
  $T_3$ & $10^{-3}$ & $1$ & $4.31$ & $0.79$ \\
  
  $T_4$ & $10^{-4}$ & $1$ & $7.47$ & $0.87$ \\

\end{tabular}
}
\end{table}

\section{Numerical evaluation of the approximants}\label{numer_eval}

Before verifying the near-optimality of our proposed concentrated real poles, we  briefly discuss several numerical methods for efficiently computing $\{r_{t,n}(A)\bm{b}\}_{t \in T}$, the action of a family of rational matrix functions on a vector, where $r_{t, n}$ is of the form \eqref{parial_fraction_same_poles} with the choice of $\sigma$ as suggested in Section~\ref{theories}.

\subsection{Rational Chebyshev interpolation}
The first method is to construct $r_{t, n}(z)$ as the rational Chebyshev interpolant of $\exp(-tz)$ for $z \in [0, +\infty)$. By considering the M\"{o}bius transformation $\widehat z = (z-\sigma)/(z+\sigma)$, this is equivalent to finding the polynomial Chebyshev interpolant $p_{t, n}(\widehat z)$ to the function $s_{t,\sigma}(\widehat z) = \exp(-t\sigma(1+\widehat z)/(1-\widehat z))$ for $\widehat z \in [-1, 1]$. If we represent $p_{t, n}(\widehat z)$ as
\[
p_{t, n}(\widehat z)=\displaystyle\sum_{k=0}^{n}c_{t,k} T_k(\widehat z),
\]
where $T_k(\widehat z) =  \cos(k \arccos(\widehat z))$ is the degree-$k$ Chebyshev polynomial of the first kind, then the coefficients $\{c_{t,k}\}$ can be computed from samples of $s_{t,\sigma}$ at Chebyshev points using fast Fourier Transform (FFT)~\cite[Chaper~4]{trefethen2019approximation}. Now we have 
\begin{equation}\label{cheb_interp}
r_{t, n}(A)\bm{b} = p_{t, n}(\widehat{A})\bm{b} = \displaystyle\sum_{k=0}^{n}c_{t,k}T_k(\widehat{A})\bm{b},
\end{equation}
where 
\[
\widehat{A} = (A-\sigma I)(A+\sigma I)^{-1}.
\]
The vector-valued Chebyshev approximants $T_k(\widehat{A})\bm{b}$ in \eqref{cheb_interp} can be computed efficiently by  running the three-term recurrence
\[
T_{k+1}(\widehat{A})\bm{b} = 2\widehat{A}T_{k}(\widehat{A})\bm{b}-T_{k-1}(\widehat{A})\bm{b}, \quad T_0(\widehat{A})\bm{b} = \bm{b}, \quad T_1(\widehat{A})\bm{b} = \widehat{A}\bm{b}. 
\]
Note that these $n+1$ vectors need to be generated only once and could be summed with different coefficients $\{c_{t,k}\}$ if the evaluation at various time points~$t$ is required. Overall, this amounts to solving $n$ shifted linear systems independent of the number of time points $t$ to be evaluated for. Furthermore, this Chebyshev interpolation approach guarantees near-best approximation quality, since the supremum approximation error of the Chebyshev interpolant is at most a factor $2+2\log(n+1)/\pi$ larger than that of the best approximation~\cite[Theorem~16.1]{trefethen2019approximation}. 

\subsection{Shift-and-invert Arnoldi method}

As an alternative we consider a shift-and-invert Arnoldi approach as in \cite{van2006preconditioning} with our optimized poles. Let  a symmetric positive semidefinite matrix~$A$, a vector $\bm{b}$, the pole $\sigma<0$ and an integer~$n$ be given. We  generate a Krylov basis matrix $V_{n}\in\mathbb{R}^{N\times n}$ with orthonormal columns that satisfies the Arnoldi relation
\[
(A - \sigma I)^{-1} V_{n} = V_{n} H_{n} + h_{n+1,n}  \bm{v}_{n+1}\bm{e}_n^T,
\]
where $H_n\in\mathbb{R}^{n\times n}$ is symmetric tridiagonal,  $\bm{v}_{n+1}\perp \mathrm{colspan}(V_n)$, and $\bm{e}_j$ denotes the $j$-th canonical unit vector in $\mathbb{R}^n$. The computation of this decomposition requires the solution of $n$ shifted linear systems. The shift-and-invert Arnoldi approximation to $\exp(-t A)\bm{b}$ is then given by
\[
\bm{f}_{t,n} = V_n \exp(-t [H_n^{-1} + \sigma I]) (\|\bm{b}\|_2 \bm{e}_1).
\]
This formula involves the evaluation  of an $n\times n$ matrix function, typically much smaller than the original problem size $N \gg n$. 

As opposed to the rational Chebyshev interpolant, the rational function $r_{t,n}$ underlying the shift-and-invert Arnoldi approximant $\bm{f}_{t,n} = r_{t,n}(A)\bm b$ has the potential to adapt to the discrete spectrum of $A$, and to the spectral components in $\bm b$. This property is known as \emph{spectral adaptivity} and it can lead to significant improvements in approximation accuracy; see, e.g., \cite{BG12}.

\section{Numerical experiments}\label{numerical_examples}
This section is dedicated to numerical experiments.  In Section~\ref{near-opt_check} we demonstrate that our proposed choice of poles derived using the approach in Section~\ref{theories} is almost indistinguishable from using best approximants produced by the Remez algorithm.  An application to a linear constant-coefficient initial-value problem is presented in Section~\ref{appl}.

\subsection{Verifying near-optimality}\label{near-opt_check}
Let us provide numerical evidence for the near-optimality of our choice of poles. Figure~\ref{n20} illustrates the time-uniform error $\rho^{T}_{n}(q)$ for various values of $q$, under degree $n = 20$, $T = T_1, T_2, T_3, T_4$, respectively. Here, we discretize each $T$ into $41$ logarithmically spaced time points, and use the scaled reciprocal $\gamma:= (nq)^{-1} = -\sigma^{-1}$ as the horizontal axis for a clearer visualization of our plots. The numerical method we use to compute $\rho^{t}_{n}(q)$ for all $t \in T$ is to solve an associated best polynomial approximation problem on $[-1,1]$ via the Chebfun $\texttt{'minimax'}$ command~\cite{filip2018rational} by exploiting  the following equivalence transformation \eqref{polynomial_approximation} using a change of variable $\widehat z  = (\gamma z+1)/(\gamma z-1)$,
\begin{equation}\label{polynomial_approximation}
    \begin{split}
    \rho^{t}_{n}(q) & = \inf\Big\{\sup_{z \in [0, +\infty)}\vert r_{t,n}(z)-\exp(-tz) \vert: r_{t,n} \in \mathcal{R}^n_q\Big\} \\
    & = \inf\Big\{\sup_{\widehat z \in [-1,1]}\vert p_{n}(\widehat z)-\exp(t\gamma^{-1}(1+\widehat z)/(1 - \widehat z)) \vert: p_{n} \in \mathcal{P}_n\Big\}.
    \end{split}
\end{equation}
(While the function $\exp(t\gamma^{-1}(1+\widehat z)/(1 - \widehat z))$ is not defined at $\widehat z = 1$, this is a removable singularity when setting the function value to be zero at that point.) Hence finding the optimal $r_{t,n}^* \in \mathcal{R}^n_q$ is equivalent to a best polynomial approximation problem. 
We can use this relation to numerically search for the optimal $q^*>0$ that minimizes $\rho^{t}_{n}(q)$.

In Figure~\ref{n20}, the black circles mark the corresponding time-uniform error $\rho^{T}_{20}(\widehat{q})$ computed using our choice of concentrated poles $\sigma(20, T_1)$, $\sigma(20, T_2)$, $\sigma(20,T_3)$, $\sigma(20,T_4)$, and they are visually close to the time-uniform errors $\rho^{T}_{20}(q^*)$, associated with the truly optimal computed concentrated pole parameter $q^*$, which are marked by red circles.

 \begin{figure}[ht]
    \centering
    \begin{subfigure}[b]{0.48\textwidth}
        \hspace*{-3mm}\includegraphics[width=1.1\textwidth]{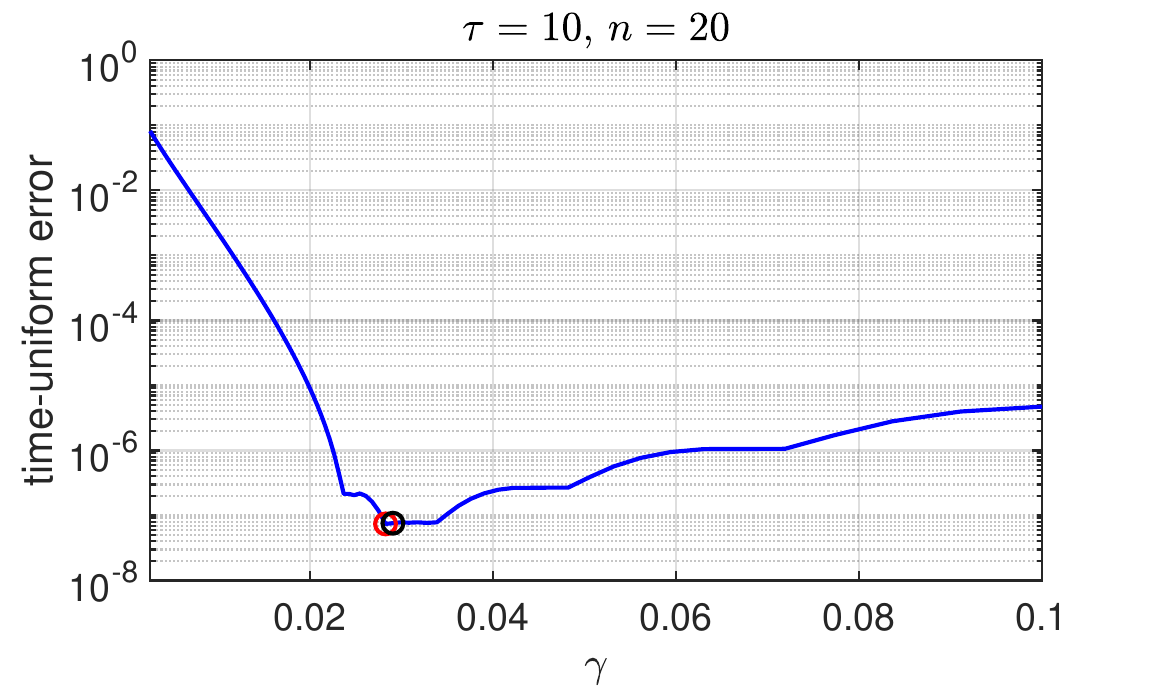}
    \end{subfigure}
    \begin{subfigure}[b]{0.48\textwidth}
        \includegraphics[width=1.1\textwidth]{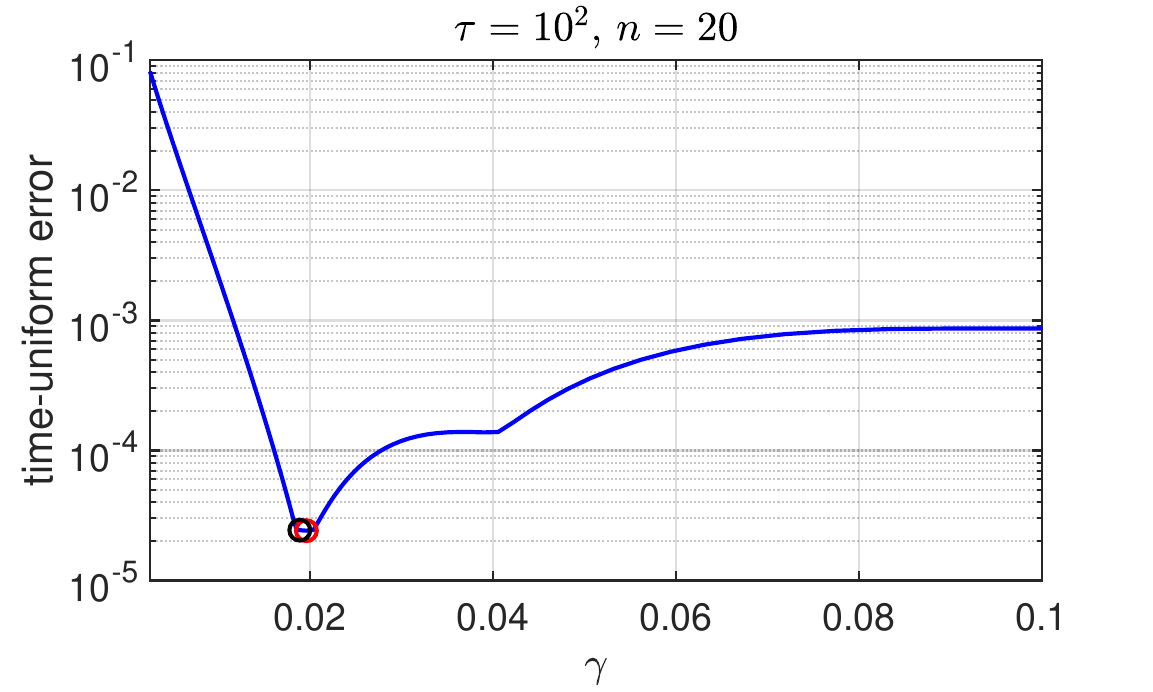}
    \end{subfigure}
    
    \vspace{0.5cm} 
    
    \begin{subfigure}[b]{0.48\textwidth}
         \hspace*{-3mm}\includegraphics[width=1.1\textwidth]{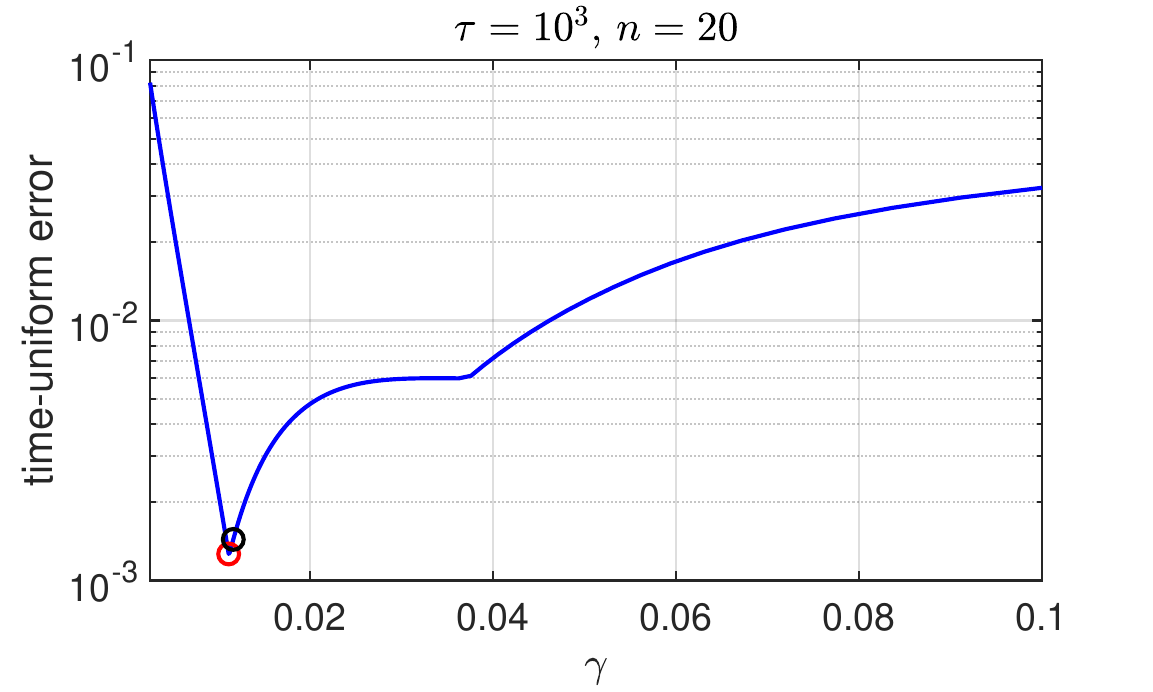}
    \end{subfigure}
    \begin{subfigure}[b]{0.48\textwidth}
        \includegraphics[width=1.1\textwidth]{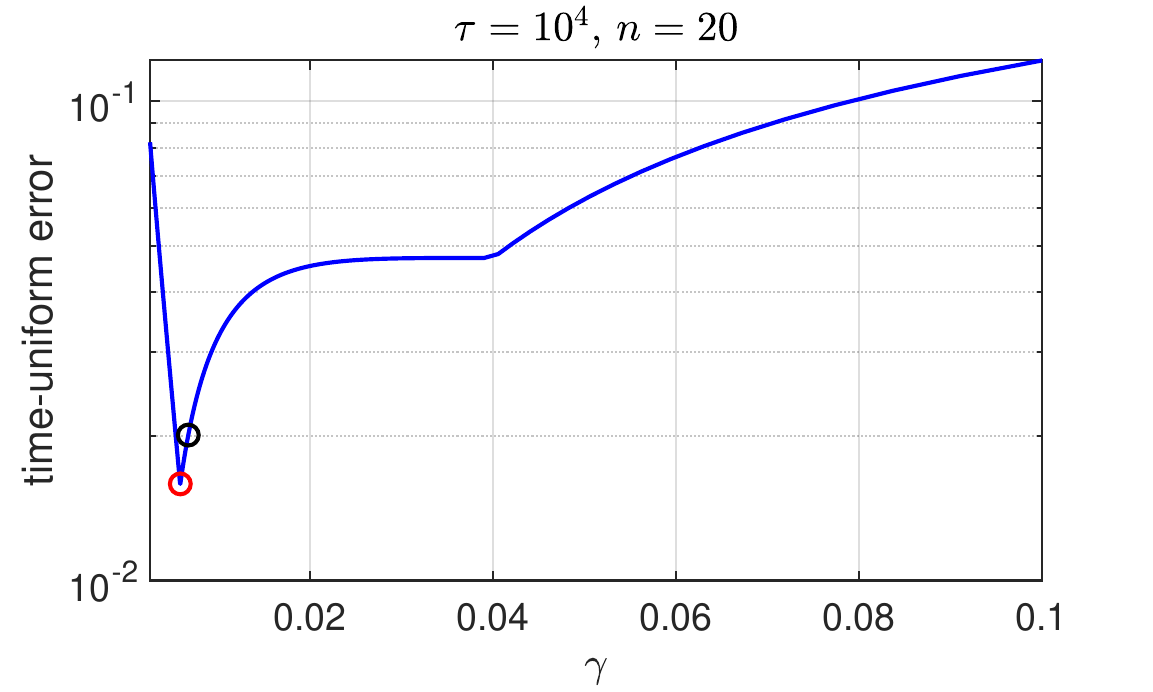}
    \end{subfigure}
    
    \caption{\textit{Time-uniform error when using our choice of concentrated poles (marked by black circles) and numerically computed optimal choice of poles (marked by red circles), for different time ratios $\tau = t_{\max}/t_{\min}$. The degree is $n = 20$ in all cases.} }
    \label{n20}
\end{figure}

Table \ref{table2} collects the values of $\rho^{T}_{20}(\widehat{q})$ and $\rho^{T}_{20}(q^*)$ for $T = T_1, T_2, T_3,T_4$, and one can see that our choices of concentrated poles are near-best in all these cases. 

\begin{table}[ht]
\centering
\caption{Time-uniform approximation errors \textit{$\rho^{T}_{20}(\widehat{q})$ and \textbf{$\rho^{T}_{20}(q^*)$}, achieved with our asymptotically optimal poles versus the true optimal poles, respectively,  for different time intervals  $T_1$, $T_2$, $T_3$, $T_4$.}}
\label{table2}

\setlength{\tabcolsep}{10pt}\begin{tabular}{c c c}
 
  \textbf{$T$} & \textbf{$\rho^{T}_{20}(\widehat{q})$} & \textbf{$\rho^{T}_{20}(q^*)$} \\
  \hline
  $T_1$ & $7.75\times 10^{-8}$ & $7.15\times 10^{-8}$ \\
  
  $T_2$  & $2.44 \times 10^{-5}$ & $2.41 \times 10^{-5}$ \\
  
  $T_3$  & $1.40 \times 10^{-3}$ & $1.30 \times 10^{-3}$ \\
  
  $T_4$  & $2.02 \times 10^{-2}$ & $1.59\times 10^{-2}$ \\

\end{tabular}

\end{table}

Figure \ref{full_plot} shows the time-uniform error using our choices of poles, $\rho^{T}_{n}(\widehat{q})$, and the computed truly optimal poles, $\rho^{T}_{n}(q^*)$, for degrees~$n$ ranging from $1$ to $40$, and time ranges $T = T_0, T_1, T_2, T_3, T_4$. It can be seen from the figure that our choice of poles is near-optimal for all degrees and time ranges in consideration. The time-uniform error under the single time point $T_0$ stagnates for degree $n \geq 36$ as the level of machine precision is reached.

\begin{figure}[H]
    \centering
    \includegraphics[width = 0.8\linewidth]{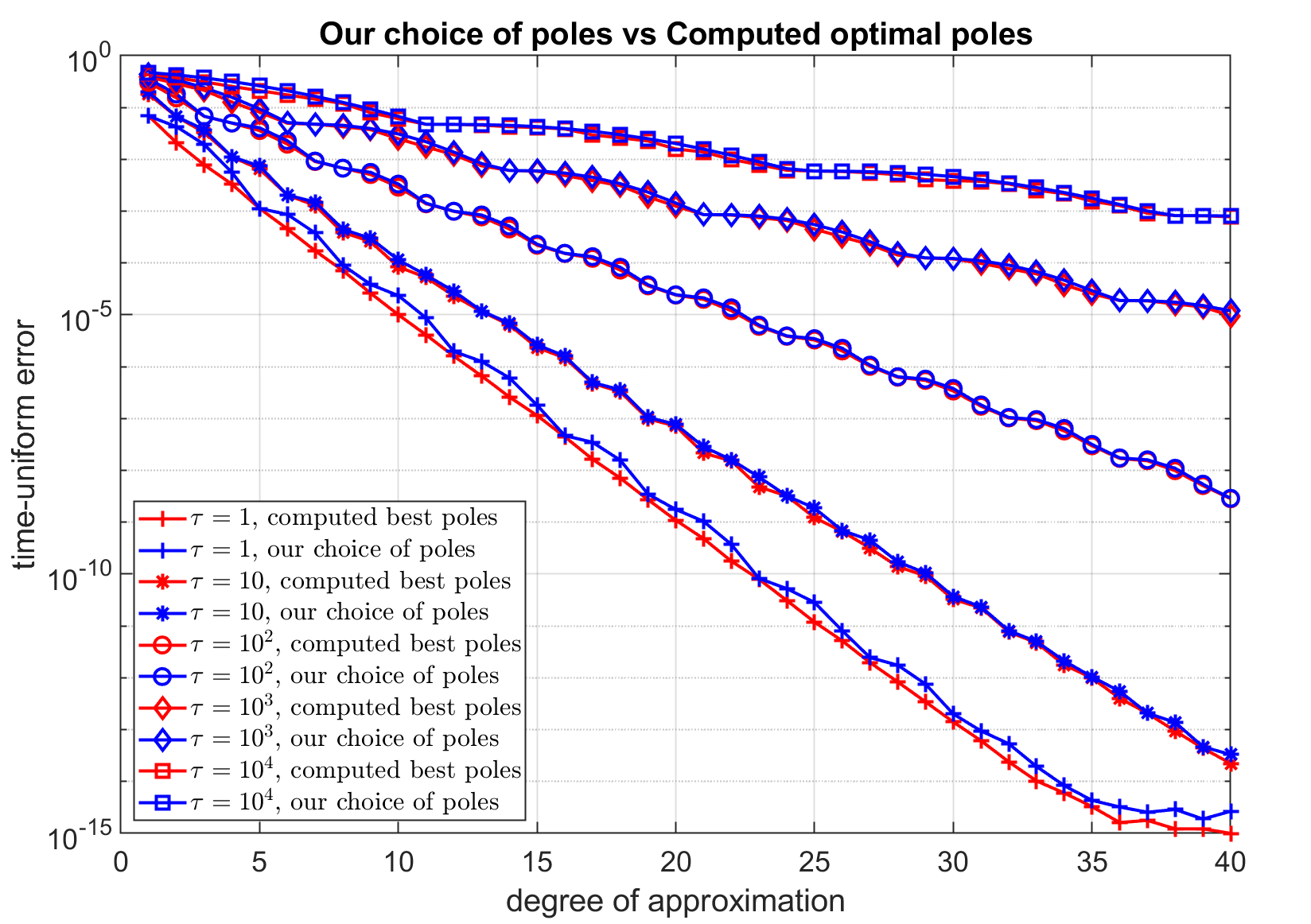}
    \caption{\textit{Time-uniform error using our choice of poles and the numerically computed optimal poles for the ranges $T_0$, $T_1$, $T_2$, $T_3$, $T_4$, for degree $n $ from $1$ to $40$.}}
    \label{full_plot}
\end{figure}

\subsection{An application to matrix exponential function}\label{appl}
We are now concerned with the approximation of the action of matrix exponential functions on a vector using our time-uniform approximants. Let us consider the linear constant-coefficient initial value problem
\begin{equation}\label{matexp}
    \bm{u}'(t)+L\bm{u}(t) = \bm{0}, \hspace{.5cm}\bm{u}(0) = \bm{u}_0.
\end{equation}
Here, $L \in \mathbb{R}^{4761\times 4761}$ is a sparse symmetric matrix arising from a finite-difference discretization of the scaled 2D Laplace operator $-0.2\Delta$ on the domain $\Omega = [-1,1]^2$ with  homogeneous Dirichlet boundary conditions, and $\bm{u}_0$ corresponds to the discretization of the initial function $u_0(x,y) = (1-x^2)(1-y^2)e^x$ on $\Omega$. 
We are interested in obtaining the solution $\bm{u}(t)$ of ~\eqref{matexp} at several time points $T = \{t_j\}_{j=1}^{s}$. A similar example has been considered in \cite{trefethen2006talbot} and~\cite[Section~6.2]{berljafa2017rkfit} (see also \cite{berljafa2015pole} for the corresponding MATLAB code).

Figure~\ref{100_2by2} shows the 2-norm error of the rational approximation $\widehat{\bm{u}}(t) := r_{t,n}(L)\bm{u}_0$ to the exact solution $\bm{u}(t) := \exp(-tL)\bm{u}_0$ for $41$ log-spaced time points in $[10^{-3}, 1]$ using the form~\eqref{parial_fraction_same_poles}, and concentrated poles $\sigma$ selected according to Theorem~\ref{asymptoticbestpoles_manyf}, along with its associated error bound $\rho(n,t)$ in~\eqref{error_bound}, for degrees  $n = 8, 14, 20, 26$. The approaches we use to compute $\widehat{\bm{u}}(t)$ are the rational Chebyshev interpolation and the shift-and-invert Arnoldi method, which are described in Section~\ref{numer_eval}. We also compute an ``approximate error bound'' by running the rational Chebyshev method for a diagonal matrix having ``dense'' spectrum on $[0,+\infty)$ and with a vector of all ones. This allows us to estimate the scalar approximation error of the rational interpolants. We find that the error bounds at the two time endpoints attain almost the same level, which is the maximum throughout the time range. This aligns with our choice of concentrated poles, which is constructed by minimizing the right-hand side of the equation~\eqref{argmin}, balancing the approximation over the whole time interval. We also see that the shift-and-invert Arnoldi method delivers significantly better approximants than the bound suggests. This is due to the spectral adaptation of the underlying rational functions. These effects would reduce if we chose a finer discretization of the differential operator or a rougher right-hand side vector (e.g., a random $\bm{u}_0$). 


\begin{figure}[h]
    \centering
    \begin{subfigure}[b]{0.48\textwidth}
        \includegraphics[width=1.1\textwidth]{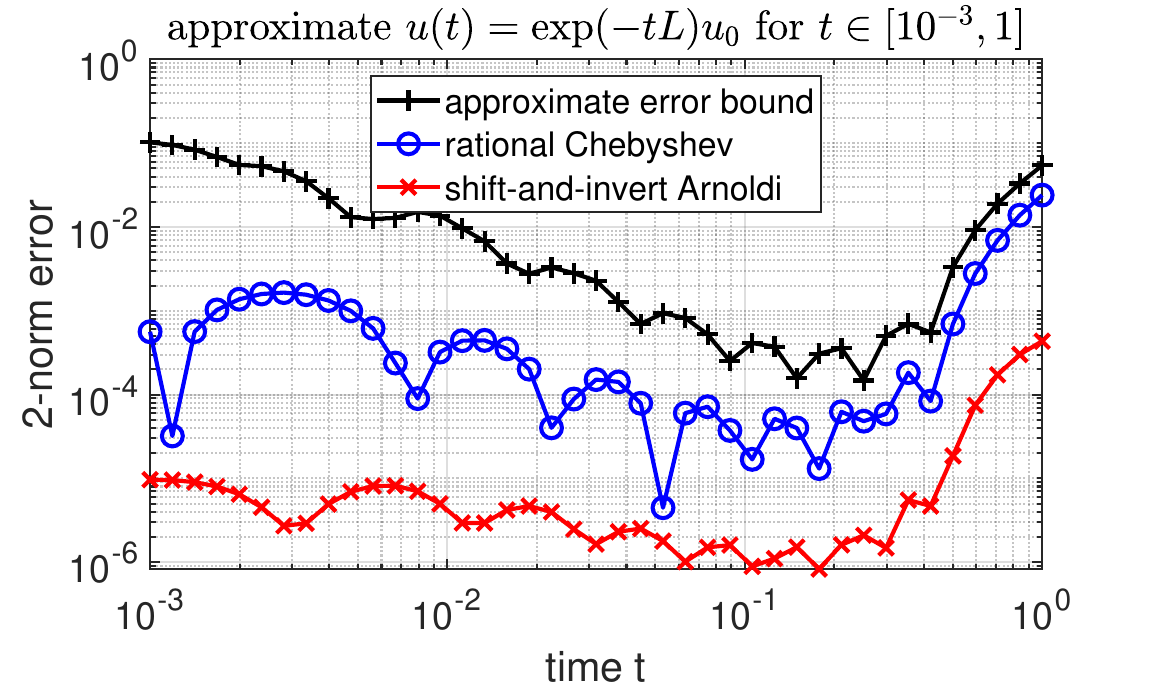}
        \caption{\textit{$n = 8$, $\tau = 10^3$.}}
        
    \end{subfigure}
    \hfill
    \begin{subfigure}[b]{0.48\textwidth}
        \includegraphics[width=1.1\textwidth]{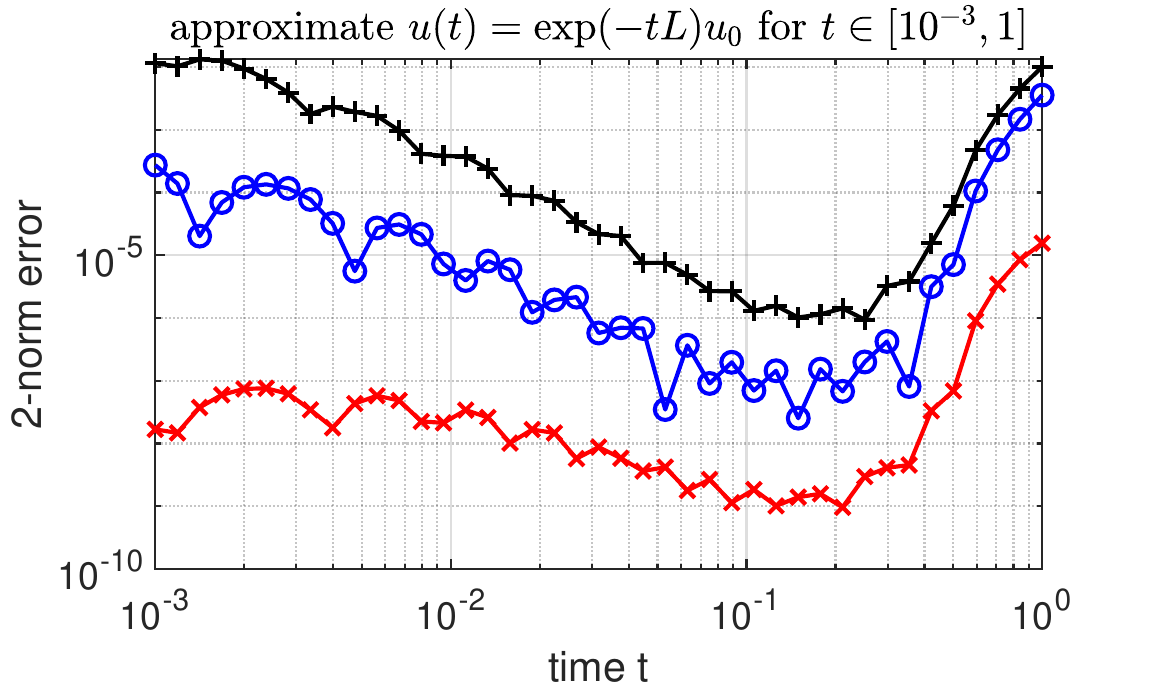}
        \caption{\textit{$n = 14$, $\tau = 10^3$.}}
    \end{subfigure}
    
    \vspace{0.5cm} 
    
    \begin{subfigure}[b]{0.48\textwidth}
        \includegraphics[width=1.1\textwidth]{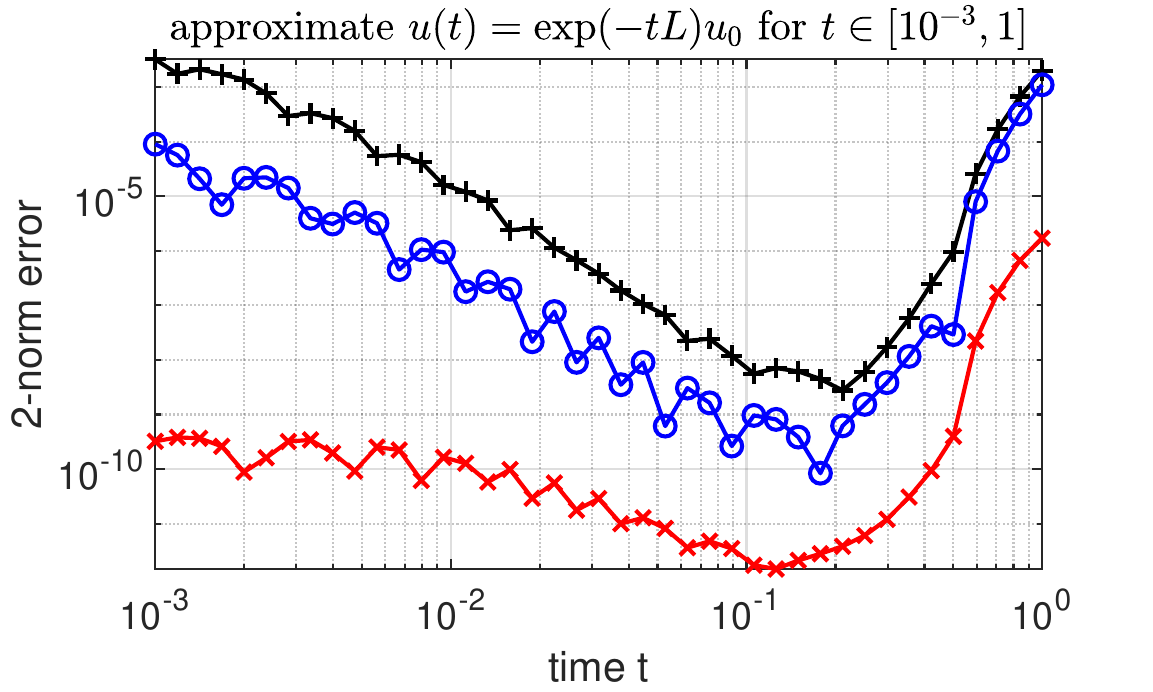}
        \caption{\textit{$n = 20$, $\tau = 10^3$.}}
    \end{subfigure}
    \hfill
    \begin{subfigure}[b]{0.48\textwidth}
        \includegraphics[width=1.1\textwidth]{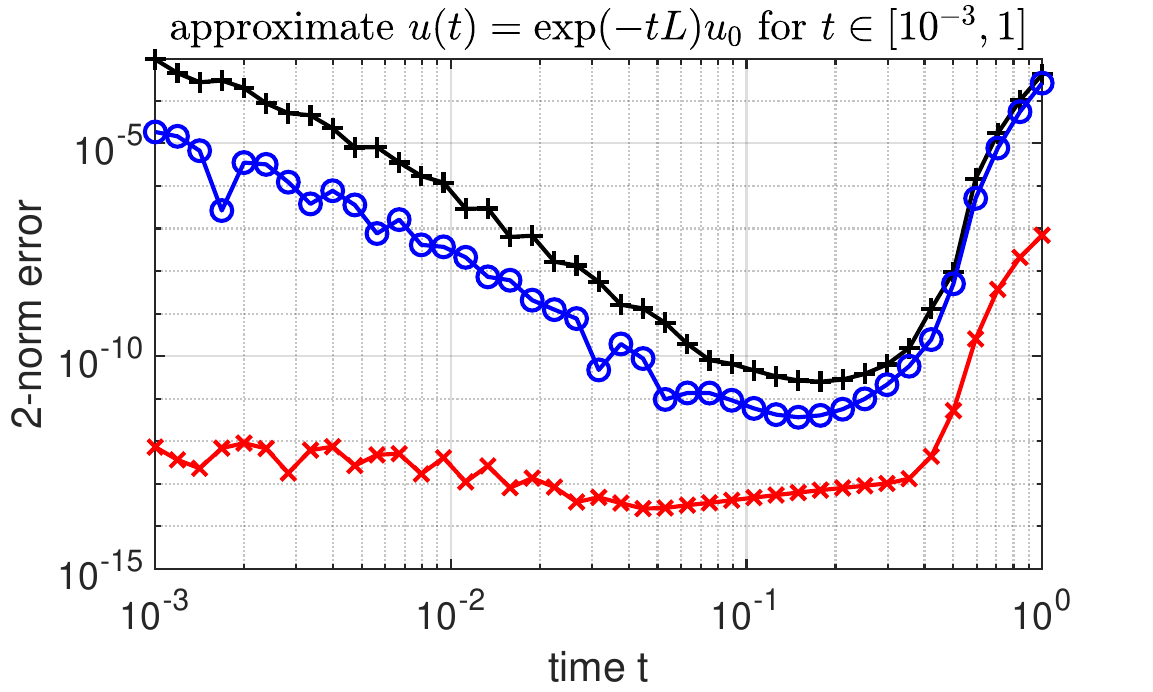}
        \caption{\textit{$n = 26$, $\tau = 10^3$.}}
    \end{subfigure}
    
    \caption{\textit{Plot of $\Vert \widehat{\bm{u}}(t) - \bm{u}(t)\Vert_2$ using rational Chebyshev interpolation (blue), shift-and-invert Arnoldi (red), and their approximate error bound (black), for the time points $T = \texttt{logspace(-3,0,41)}$ and degrees $n = 8, 14, 20, 26$.}}
    \label{100_2by2}
\end{figure}

\section{Conclusions}\label{discussions}
We have proposed an asymptotically optimal choice of shared concentrated real poles for the rational approximation of a family of time-dependent exponential functions. The result is a generalization of~\cite{andersson1981approximation} to any time interval. 

In future work, we plan to further explore the possibilities of finding an optimal set of non-concentrated real poles supported on the negative real axis. We also intend to investigate extensions to the shared-pole approximation of functions such as the $\varphi_j$-functions, which are closely related to the exponential function and often appear in the context of exponential integrators~\cite{hochbruck2010exponential}. More work could be devoted to a detailed comparison of the performance of the different shared-pole rational approximants (with free complex poles, real poles, or concentrated real poles) in the context of large-scale computations as they arise, e.g., in Geophysics forward modeling and in inverse problems.

\section*{Acknowledgments}
We are grateful to Yoshihito Kazashi, Nick Trefethen, and Marcus Webb for providing  useful suggestions.
S.\,G. acknowledges funding from the UK's Engineering and Physical Sciences Research Council (EPSRC grant EP/Z533786/1) and the Royal Society (RS Industry Fellowship IF/R1/231032). S.\,S. acknowledges a  Dean's Doctoral Scholarship from the University of Manchester.

\bibliography{ref}
\bibliographystyle{abbrv}

\end{document}